\newtheoremstyle{my}{1.5em}{0.5em}{\em}{}{\sc}{.}{0.5em}{}
\theoremstyle{my}
\newtheorem{thm}{Theorem}
\newtheorem{remark}[thm]{Remark}
\newtheorem{lemma}[thm]{Lemma}
\title{Volume renormalization for conformally compact asymptotically hyperbolic manifolds in dimension four}
\author{Shih-Tsai Feng\\
stfeng@mail.ndhu.edu.tw\\Department of Applied Mathematics\\ National Dong Hwa University, Hualien, Taiwan 97401}
\date{}
\begin{document}

\maketitle
Abstract. In this paper, we derive a new renormalized volume formula for conformally compact asymptotically hyperbolic manifolds in dimesion four. The 
formula generlizes the ones given in Anderson [4], Albin [1], and Chang-Qing-Yang [8] for the case of Poincare-Einstein manifolds. We also derive variational formulas
for the renormalized volume seen as a functional on the manifold.

\section{Introduction}
  Let $M$ be a compact connected oriented manifold with non-empty boundary $\partial M$. Following Penrose, a complete Riemannian metric $g$ on $M$ is 
conformally compact if there is a function $\rho$ on $\overline{M}$ such that $\rho|_{\partial M}=0$, $d\rho \neq 0$ on $\partial M$ and $\rho >0$ on $M$, 
and such that $\overline{g}=\rho^{2}g$ extends at least continuously to a Riemannian metric $\overline{g}$ on $\overline{M}$. Such a function $\rho$ is 
called a boundary defining funciton (bdf) for $\partial M$. We will assume that $\overline{g}$ is at least $C^{3}$ smooth up to $\overline{M}$. Conversely
 if $\overline{g}$ is any smooth Riemannian metric on $\overline{M}$ and if $\rho$ is any $C^{1}$ boundary defining function, then $g=\rho^{-2}\overline{g}$
gives a complete conformally compact metric on $M$.\\

  Note that the defining function is not unique, since any multiple of positive smooth function on $\overline{M}$ gives another defining function. Hence,
both the metric $\overline{g}$ and its induced metric $\gamma$ on $\partial M$ are not uniquely defined by $(M,g)$. However, the conformal class 
$[\gamma]$ of the boundary metric on $\partial M$ is uniquely determined by the complete metric $g$. Because of this, we call $(\partial M,[\gamma])$ the 
conformal infinity of $(M,g)$.\\
 
  For the conformal transformation $\overline{g}=\rho^{2}g$, we have the following formulas for curvatures, see Besse [6].\\
\begin{eqnarray*}
\overline{K}_{ij}&=&\frac{K_{ij}+|\overline{\nabla}\rho|^{2}}{\rho^{2}}-\frac{1}{\rho}(\overline{\nabla}^{2}\rho(\overline{x}_{i},\overline{x}_{i})
+\overline{\nabla}^{2}\rho(\overline{x}_{j},\overline{x}_{j})),\\
\overline{r}&=&r-2\frac{1}{\rho}\overline{\nabla}^{2}\rho-(\frac{\overline{\triangle}\rho}{\rho}-3\frac{|\overline{\nabla}\rho|^{2}}{\rho^{2}})\overline{g},\\
\overline{s}&=&\frac{1}{\rho^{2}}s-6\frac{\overline{\triangle}\rho}{\rho}+12\frac{\rho^{2}}{|\overline{\nabla}\rho|^{2}}.
\end{eqnarray*}

From Riemannian curvature transformation formula, we define the complete metric $g$ to be asymptotically hyperbolic (AH) if the norm of the gradient of 
$\rho$ with respect to the metric $\overline{g}$ is one, i.e. $||\overline{\nabla}\rho||_{\overline{g}}=1$ on $\partial M$. Note that 
$||\overline{\nabla}\rho||_{\overline{g}}$ is an invariant of the conformal infinity $(\partial M,[\gamma])$. We have the following lemma from [14]. See 
lemma 2.1 in [14].

\begin{lemma}[Graham]
If $(M,g)$ is AH, then any metric on $\partial M$ in the conformal class $[\gamma]$ determines a unique boundary defining function $\rho$ in a 
neighborhood of $\partial M$ such that $\overline{g}|_{\partial M}$ is the prescribed boundary metric and such that 
$||\overline{\nabla}\rho||_{\overline{g}}=1$ in the neighborhood.
\end{lemma}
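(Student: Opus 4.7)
The plan is to reduce the statement to a noncharacteristic Cauchy problem for a first-order PDE. First, fix any boundary defining function $\rho_{0}$ whose associated compactified metric $\overline{g}_{0}=\rho_{0}^{2}g$ restricts to the prescribed representative $\hat{\gamma}$ on $\partial M$; such a $\rho_{0}$ is obtained from an arbitrary bdf $\tilde{\rho}$ by writing $\hat{\gamma}=\lambda^{2}\tilde{\gamma}$ for a positive smooth $\lambda$ on $\partial M$, extending $\lambda$ smoothly into a collar, and setting $\rho_{0}=\lambda\tilde{\rho}$. I then look for the desired $\rho$ in the form $\rho=e^{u}\rho_{0}$ with $u|_{\partial M}=0$; since $\overline{g}=\rho^{2}g=e^{2u}\overline{g}_{0}$, every such $\rho$ automatically induces the prescribed boundary metric.

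A direct computation using $d\rho=e^{u}(\rho_{0}\,du+d\rho_{0})$ and the conformal rescaling of the inverse metric gives
\begin{equation*}
|d\rho|^{2}_{\overline{g}} \;=\; \rho_{0}^{2}\,|du|^{2}_{\overline{g}_{0}} \;+\; 2\rho_{0}\,\langle du,d\rho_{0}\rangle_{\overline{g}_{0}} \;+\; |d\rho_{0}|^{2}_{\overline{g}_{0}}.
\end{equation*}
The AH hypothesis $|d\rho_{0}|^{2}_{\overline{g}_{0}}=1$ on $\partial M$ guarantees that $F:=(1-|d\rho_{0}|^{2}_{\overline{g}_{0}})/\rho_{0}$ extends smoothly to a neighborhood of $\partial M$ by Taylor's theorem, and the eikonal condition $|d\rho|^{2}_{\overline{g}}=1$ reduces to the first-order PDE
\begin{equation*}
2\langle du,d\rho_{0}\rangle_{\overline{g}_{0}}+\rho_{0}\,|du|^{2}_{\overline{g}_{0}}=F, \qquad u|_{\partial M}=0.
\end{equation*}

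The key structural point is that this Cauchy problem is noncharacteristic along $\partial M$. Because $d\rho_{0}$ is nowhere zero and transverse to $\partial M$, the principal symbol $2\langle\,\cdot\,,d\rho_{0}\rangle_{\overline{g}_{0}}$ pairs nontrivially with the conormal of $\partial M$, so the equation can be solved for the normal derivative $\partial_{\rho_{0}}u$ at the boundary; at $\partial M$ this prescribes $\partial_{\rho_{0}}u=\tfrac{1}{2}F|_{\partial M}$, and the transversal characteristic field is (up to reparametrization) $\overline{\nabla}_{0}\rho_{0}$. The classical method of characteristics for first-order quasilinear (after reduction) PDEs then produces a unique solution $u$ in a one-sided neighborhood of $\partial M$, as regular as the coefficients permit. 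Setting $\rho:=e^{u}\rho_{0}$ yields a bdf with $d\rho|_{\partial M}=d\rho_{0}|_{\partial M}\neq 0$, inducing $\hat{\gamma}$ on $\partial M$ and satisfying $|\overline{\nabla}\rho|_{\overline{g}}=1$ by construction.

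The main obstacle is verifying the noncharacteristic condition and the smoothness of $F$ up to $\partial M$; both hinge exactly on the AH hypothesis, after which the standard local existence and uniqueness theory for noncharacteristic first-order Cauchy problems finishes the argument. Uniqueness of $\rho$ subject to the two stated conditions follows along the same route: any two candidates $\rho,\rho'$ are related by a conformal factor $\rho'=e^{u}\rho$ with $u|_{\partial M}=0$, and $u$ must solve the same Cauchy problem with zero data and hence vanish identically near $\partial M$.
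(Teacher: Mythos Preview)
Your argument is correct and is essentially the classical proof due to Graham. Note, however, that the paper does not supply its own proof of this lemma: it is quoted verbatim from Graham's paper (reference [14], Lemma~2.1) and invoked as a black box, so there is nothing in the present paper to compare against beyond the citation. Your reduction to a noncharacteristic first-order Cauchy problem via the ansatz $\rho=e^{u}\rho_{0}$, together with the observation that the AH hypothesis makes $F=(1-|d\rho_{0}|^{2}_{\overline{g}_{0}})/\rho_{0}$ smooth up to $\partial M$ and renders the problem noncharacteristic, is exactly Graham's approach. One small wording remark: the PDE $2\langle du,d\rho_{0}\rangle_{\overline{g}_{0}}+\rho_{0}|du|^{2}_{\overline{g}_{0}}=F$ is fully nonlinear (Hamilton--Jacobi type) rather than quasilinear, but the standard method of characteristics for noncharacteristic first-order equations applies without change, so this does not affect the validity of your proof.
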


We will call such a boundary defining function, a special bdf. From now on, we will assume that $(M,g)$ is AH, and a special bdf $\rho$ is chosen. By the 
above lemma, in a neighborhood defined by $\rho$, the manifold $M$ is diffeomorphic to $\partial M\times [0,\epsilon)$. If we suppose that the 
compactification is at least $C^{3}$, then in $\partial M\times [0,\epsilon)$, the compactified metric $\overline{g}$ has the form 
\[\overline{g}=d\rho^{2}+g_{\rho}.\]

The metric $g_{\rho}$ can be seen as a family of metrics on $\partial M$ and it has the expansion 
\begin{equation*}
g_{\rho}=g^{(0)}+\rho g^{(1)}+\rho^{2}g^{(2)}+\rho^{3}g^{(3)}+o(\rho^{3})
\end{equation*}

Here $g^{(0)}=\gamma$ is the boundary metric and $g^{(j)}=\frac{1}{j!}\mathcal{L}_{\overline{\nabla}\rho}^{(j)}g_{\rho}|_{\partial M}$ is the j-th Lie derivative. We 
have the following 
\begin{lemma}
The compactification is totally geodesic, namely $(\partial M,\gamma)$ is a totally geodesic submanifold in $(M,\overline{g})$ if and only if $ g^{(1)}=0$.

\end{lemma}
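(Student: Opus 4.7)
The plan is to identify the first derivative $g^{(1)}$ with (twice) the second fundamental form of $\partial M$ in $(\overline{M},\overline{g})$, after which the equivalence is immediate from the definition of a totally geodesic submanifold.

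First I would exploit the special boundary defining function. Since $\rho$ is a special bdf, $\|\overline{\nabla}\rho\|_{\overline{g}} = 1$ on the collar $\partial M \times [0,\epsilon)$, so $N := \overline{\nabla}\rho = \partial_\rho$ is the inward unit normal to every level set $\{\rho = c\}$, and in particular to $\partial M = \{\rho = 0\}$. Moreover, the integral curves of $\partial_\rho$ are unit-speed geodesics of $\overline{g}$ (this is the standard Gauss lemma argument that gives the product form $\overline{g} = d\rho^2 + g_\rho$ in the first place).

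Next I would compute the second fundamental form via a Lie derivative. For any hypersurface with a unit normal $N$ extended geodesically off the surface, the standard identity
\[
(\mathcal{L}_N \overline{g})(X,Y) \;=\; \overline{g}(\overline{\nabla}_X N, Y) + \overline{g}(X, \overline{\nabla}_Y N) \;=\; 2\, II(X,Y)
\]
holds for $X,Y$ tangent to the hypersurface. Applied in our collar, $\mathcal{L}_{\partial_\rho}\overline{g} = \mathcal{L}_{\partial_\rho}(d\rho^2 + g_\rho) = \partial_\rho g_\rho$, since the $d\rho^2$ piece is $\partial_\rho$-invariant and $g_\rho$ has no $d\rho$ components. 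Restricting to $\rho = 0$ and using the definition $g^{(1)} = \mathcal{L}_{\overline{\nabla}\rho} g_\rho|_{\partial M}$ from the excerpt, we obtain
\[
2\, II \;=\; \mathcal{L}_{\partial_\rho}\overline{g}\big|_{\partial M} \;=\; g^{(1)}.
\]

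Finally, $(\partial M,\gamma)$ is totally geodesic in $(\overline{M},\overline{g})$ if and only if $II \equiv 0$, which by the identity above is equivalent to $g^{(1)} = 0$. This proves both directions simultaneously. The only subtle point (and the main thing to check carefully) is the identification $N = \partial_\rho$ and the resulting formula $\mathcal{L}_{\partial_\rho} \overline{g}|_{\partial M} = g^{(1)}$; everything else is formal. There is no significant obstacle, as the Taylor expansion in the collar was set up precisely so that $g^{(1)}$ records the first-order deformation of the boundary metric, i.e. the extrinsic geometry of $\partial M$.
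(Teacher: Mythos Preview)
Your argument is correct: identifying $g^{(1)}$ with twice the second fundamental form of $\partial M$ in $(\overline{M},\overline{g})$ via the Lie derivative formula $\mathcal{L}_{N}\overline{g}=2\,II$ is exactly the right mechanism, and the use of the special bdf to guarantee $N=\partial_\rho$ is a unit normal is the only nontrivial input. The paper itself states this lemma without proof, treating it as a standard fact (and remarking afterward that in the Poincar\'e--Einstein case the totally geodesic property is established in \cite{14}); your write-up supplies precisely the elementary verification the paper omits.
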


  We will assume from now on that $(M,g)$ is a conformally compact AH manifold of dimension four with a totally geodesic compactification and a special
bdf is chosen. In the case of Poincare-Einstein metric, it is proved in [14], that  $(\partial M,\gamma)$ is totally geodesic. This implies that the 
assumption above is natural.

   In general it is too restrictive to suppose that the metric $\overline{g}$ is $C^{\infty}$ smooth up to the boundary. In the case of 
conformally compact Einstein metrics (Poincare-Einstein metrics), it is well known that there are $\log$ terms in the expansion of $\overline{g}$ near 
$\partial M$, see [14], [1].

  Under the above assumption, the metric has the expansion 

\begin{equation}
g_{\rho}=\gamma+\rho^{2}g^{(2)}+\rho^{3}g^{(3)}+o(\rho^{3})
\end{equation}

in the neighborhood $\partial M\times [0,\epsilon)$. The volume form of the metric $g$ in $\partial M\times [0,\epsilon)$ takes the form 

\begin{equation}
dvol_{g}=(\frac{\det g_{\rho}}{\det \gamma})^{1/2}\frac{d vol_{\gamma}d\rho}{\rho^{4}}.
\end{equation}
 
Here $(\frac{\det g_{\rho}}{\det \gamma})^{1/2}$ has the expansion

\begin{equation}
(\frac{\det g_{\rho}}{\det \gamma})^{1/2}=1+\rho^{2}v^{(2)}+\rho^{3}v^{(3)}+o(\rho^{3}),
\end{equation}

for some functions $v^{(j)}$ on $\partial M$. Note that a simple computation implies that there is no first order term of $\rho$ in the aobve expansion of 
$(\frac{\det g_{\rho}}{\det \gamma})^{1/2}$. Later in section two, we will show that the same is true for any full contraction of the curvature and its 
covariant derivatives.\\

Now consider the asymptotics of the volume $Vol_{g}(\{\rho>\epsilon\})$ as $\epsilon \rightarrow 0$ by integrating $dvol_{g}$ above. We have 

\begin{equation*}
Vol_{g}(\{\rho>\epsilon\})=\int_{\rho>\epsilon}dvol_{g}=C_{0}\epsilon^{-3}+C_{2}\epsilon^{-1}+V+o(1).
\end{equation*}

The coefficients $C_{0}$, $C_{2}$ are integrals over $\partial M$ of local curvature expression of the metric $\gamma$. $V$ is the so-called renormalized
volume, it depends a priori on the choice of the special bdf. \\

In the case of Poincare-Einstein metrics, it is now clear, see Graham [14] Theorem3 .1 or Albin [1] Theorem 2.5, that the renormalized volume is in fact 
independent of the choice of the special bdf. In the present situation, we have

\begin{lemma}
In dimension four, $V$ is independent of the choice of $\gamma$.

\end{lemma}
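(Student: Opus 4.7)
The plan is to prove invariance infinitesimally. Fix a representative $\gamma$, take any $\omega\in C^\infty(\partial M)$, set $\gamma_t=e^{2t\omega}\gamma$, and let $\rho_t$ be the special bdf given by Graham's lemma (so $\rho_0=\rho$). Write $\rho_t=u_t\rho$ with $u_t|_{\partial M}=e^{t\omega}$, and let $V_t$ be the renormalized volume computed with $\rho_t$. Since $[\gamma]$ is connected by such one-parameter families, it suffices to show $\dot V:=\tfrac{d}{dt}\big|_{t=0}V_t=0$ for every $\omega$.

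The AH normalization $\|\overline{\nabla}\rho_t\|_{\rho_t^2 g}=1$ reduces, on the collar where $\overline g=d\rho^2+g_\rho$, to the eikonal equation
\[
2u_t\,\partial_\rho u_t+\rho\,|du_t|^2_{\overline g}=0,
\]
whose Taylor coefficients $u_t^{(k)}(x)$ in $\rho$ are determined recursively from $\omega$ and the tangential data $\gamma, g^{(2)}, g^{(3)}$. Order-by-order matching gives $u_t^{(1)}=0$ (consistent with~(1)), an explicit formula for $u_t^{(2)}$, and --- crucially --- $u_t^{(3)}\equiv 0$. Using $\|\overline{\nabla}\rho\|_g=\rho$ together with $dvol_g=\rho^{-4}(\det g_\rho/\det\gamma)^{1/2}\,dvol_\gamma\,d\rho$ on the collar, the identity $\{\rho_t>\epsilon\}\cap\text{collar}=\{\rho>\epsilon/u_t\}$ and implicit differentiation at $t=0$ (where $u_0\equiv 1$) yield
\[
\frac{d}{dt}\bigg|_{t=0}Vol_g(\{\rho_t>\epsilon\})=\int_{\partial M}\epsilon^{-3}\,\dot u(x,\epsilon)\,\bigg(\frac{\det g_\rho}{\det\gamma}\bigg)^{\!1/2}\bigg|_{\rho=\epsilon}dvol_\gamma.
\]
Substituting the expansion of $\dot u$ and formula~(3) expands this as $A_0\epsilon^{-3}+A_2\epsilon^{-1}+B+o(1)$, where $A_0, A_2, B$ are local integrals over $\partial M$ in $\omega, \gamma, g^{(2)}, g^{(3)}$. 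Comparison with the direct $t$-derivative of $C_0(\gamma_t)\epsilon^{-3}+C_2(\gamma_t)\epsilon^{-1}+V_t+o(1)$ identifies $A_0=\dot C_0$, $A_2=\dot C_2$, and $B=\dot V$.

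Using $\dot u_t^{(3)}|_{t=0}=0$, the finite coefficient $B$ simplifies to $\int_{\partial M}\omega\,v^{(3)}\,dvol_\gamma$ with $v^{(3)}=\tfrac12\,\mathrm{tr}_\gamma g^{(3)}$. Since $\omega$ is arbitrary, the lemma reduces to the pointwise identity $\mathrm{tr}_\gamma g^{(3)}\equiv 0$ on $\partial M$. This is the main obstacle: in the Poincar\'e--Einstein case one has the much stronger $g^{(3)}=0$, while in the present totally-geodesic (non-Einstein) setting one needs only the trace to vanish, and establishing it requires an explicit computation of $g^{(3)}$ in terms of the intrinsic and extrinsic geometry of $\partial M\subset(\overline M,\overline g)$ via the Gauss--Codazzi--Mainardi equations together with smoothness of $\overline g$. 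The dimension-four restriction enters through the specific algebraic form of the traced curvature identity that forces the vanishing, and the totally geodesic assumption $g^{(1)}=0$ enters decisively by eliminating the otherwise-present second-fundamental-form terms at intermediate orders. Carrying out this curvature computation is, in my view, the technical heart of the proof.
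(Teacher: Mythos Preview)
The paper states Lemma~3 in the introduction without proof, so there is no argument in the paper to compare against; the surrounding text only cites Graham~[14] and Albin~[1] for the Poincar\'e--Einstein case.

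Your infinitesimal approach is exactly Graham's, and your reduction is correct: from the eikonal equation for the special bdf one indeed gets $\dot u^{(1)}=\dot u^{(3)}=0$ (the latter using $g^{(1)}=0$), and hence
\[
\dot V \;=\; \int_{\partial M}\omega\,v^{(3)}\,dvol_\gamma ,
\]
so the lemma is equivalent to the pointwise vanishing $v^{(3)}=\tfrac12\,\mathrm{tr}_\gamma g^{(3)}=0$.

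The genuine gap is that this vanishing does \emph{not} follow from the hypotheses ``AH with $C^3$ totally geodesic compactification.'' The paper's own formula~(21) gives $v^{(3)}=-\tfrac16\,\partial_\rho\overline r_{44}\big|_{\rho=0}$, and nothing in the assumptions constrains this normal derivative of the compactified Ricci curvature. Concretely, on a collar take $\overline g=d\rho^2+(1+\rho^3)\gamma$: this is smooth, has $|\overline\nabla\rho|_{\overline g}\equiv 1$, $g^{(1)}=0$, and $g^{(3)}=\gamma$, so $v^{(3)}=\tfrac32\neq 0$. For such a metric the volume expansion acquires a $\log(1/\epsilon)$ term (which the paper's displayed expansion before Lemma~3 silently omits) and your formula gives $\dot V\neq 0$ for generic $\omega$. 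In the Poincar\'e--Einstein case the Einstein equation forces $\mathrm{tr}_\gamma g^{(3)}=0$, which is precisely why Graham's argument succeeds there; in the paper's generality that constraint is absent, and indeed the proof of Theorem~7 uses only the tautology $-3\,\mathrm{tr}_\gamma g^{(3)}+6v^{(3)}=0$ from~(22), never $v^{(3)}=0$. So the step you flag as ``the technical heart of the proof'' is not a computation waiting to be done---it is an actual obstruction, and both your argument and Lemma~3 as stated appear to require a further hypothesis (such as $\mathrm{tr}_\gamma g^{(3)}=0$, or the Einstein condition).
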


Renormalized volume and its formula first appeared in the string theory community. They are interested in the conformally compact Einstein metrics because
of the role they play in the so-called AdS/CFT correspoidence. In dimension 4, the renormalized volume has the property that it is a conformal invariant.
The first mathematical proof was given by Graham in [14]. In [4], using the Chern-Gauss-Bonnet theorem with boundary, Anderson first provided a 
differential geometric understanding of the renormalized volume. His result secializes to dimension 4 and in some places the computation still depends 
on the formal
expansion. In Anderson's formula the renormalized volume coupled with the integral of the Weyl tensor which is a conformal invariant. Hence he also 
proved that the renormalized volume is a conformal invariant. Since then, attempts have been made to generalized to higher dimensions and to give a 
rigorous proof. See [1] and [8]. \\
 
In this paper, we show that the reormalized volume formula holds in a more general situation. Namely, we show that in the case $(M,g)$ is 
a conformally compact AH manifold of dimension four with a totally geodesic compactification, there is a well defined renormalized volume. We also compute
the variation of the renormalized vlolume. \\

The content of the paper is the following. In section 2, after recalling some basic facts about double forms, we compute the renormalized volume of 
conformally compact AH manifolds. The proof is in spirit similar to that of Albin's. The differnece is since we do not assume that 
the metric is Einstein, there are odd terms in the expansions of Christoffel symbils, curvatures and their full contractions. This complicates the 
computation. In section 3, we compute directly the first variation of the renormalized volume. \\

Note that in the case of minimal surfaces in three 
dimensional Poincare-Einstein manifolds, Alexikas-Mazzeo [2] considered a situation similar to our case.

\section{Renormalized volume formula}

We begin by recalling some basic facts about double forms, for more details see Labbi [19] and [20], or Gray [17]. Let $(M,g)$ be a Riemannian manifold of 
dimension $n$. Define $\mathcal{D}^{p,q}=\wedge^{p}(M) \otimes \wedge^{q}(M)$ to be the double form of type $(p,q)$. The Kulkarni-Nomizu product is given 
for $a_{1}\otimes a_{2} \in \mathcal{D}^{p,q}$ and $b_{1}\otimes b_{2} \in \mathcal{D}^{r,s}$ by 
\begin{equation*}
(a_{1}\otimes a_{2})\cdot (b_{1}\otimes b_{2})=a_{1}\wedge a_{2}\otimes b_{1}\wedge b_{2}.
\end{equation*}

There is a contraction map $c:\mathcal{D}^{p+1,q+1} \longrightarrow \mathcal{D}^{p,q}$ defined by 
\begin{equation*}
c\omega(x_{1}\wedge\cdots\wedge x_{p},y_{1}\wedge\cdots\wedge y_{q})=\sum_{j=1}^{n}\omega(e_{j}\wedge x_{1}\wedge\cdots\wedge x_{p},e_{j}\wedge y_{1}\wedge\cdots\wedge y_{q})
\end{equation*}
where $\{e_{1},\cdots,e_{n}\}$ is an orthonormal basis. The contraction map has the following property. Let $\omega \in \mathcal{D}^{p,q}$, then 
\begin{equation*}
c(g\cdot \omega)=gc\omega+(n-p-q)\omega.
\end{equation*}

The natural metric $<\cdot,\cdot>$ on $\wedge^{p}(M)$ extends to $\mathcal{D}^{p,q}$ and satisfies 

\begin{equation}
<g\omega_{1},\omega_{2}>=<\omega_{1},c\omega_{2}>.
\end{equation}

If we regard the curvature $R\in \mathcal{D}^{2,2}$ then $cR=r$, the Ricci tensor, and $c^{2}R=cr=s$, the scalar curvature. The Hodge star operator $\ast$ 
extends in a natural way to a linear map $\ast:\mathcal{D}^{p,q}\longrightarrow \mathcal{D}^{n-p,n-q}$, and it satisfies 
\begin{equation}
g\omega=\ast c \ast \omega.
\end{equation}

There are generalized covariant derivatives $D:\mathcal{D}^{p,q}\longrightarrow \mathcal{D}^{p+1,q}$ and 
$\tilde{D}:\mathcal{D}^{p,q}\longrightarrow \mathcal{D}^{p,q+1}$, so that if we denote by $\mathcal{C}^{p}$ the symmetric part of $\mathcal{D}^{p,p}$ 

\begin{equation}
D\tilde{D}+\tilde{D}D:\mathcal{C}^{p}\longrightarrow \mathcal{C}^{p+1}
\end{equation}
is a generalization to $\mathcal{C}^{p}$ of the usual Hessian operator on functions. Define $\delta=c\tilde{D}+\tilde{D}c$ and $\tilde{\delta}=cD+Dc$ then
$\tilde{\delta}\delta+\delta\tilde{\delta}:\mathcal{C}^{p+1}\longrightarrow \mathcal{C}^{p}$ is the formal adjoint of the operator 
$D\tilde{D}+\tilde{D}D$ with respect to the integral saclar product, and we have 
\begin{equation}
\tilde{\delta}\delta+\delta\tilde{\delta}=\ast(D\tilde{D}+\tilde{D}D)\ast.
\end{equation}

Finally for $h \in \mathcal{C}^{1}$, define the self-adjoint operator

\begin{eqnarray}
F_{h}(R)((X,Y)(Z,W))&=&h(R(X,Y)Z,W)-h(R(X,Y)W,Z)+\nonumber \\
                   & &h(R(Z,W)X,Y)-h(R(Z,W)Y,X).
\end{eqnarray}

Then $F_{h}$ satisfies $F_{h}(\omega \theta)=F_{h}(\omega )\theta+\omega F_{h}(\theta)$ and $<F_{h}(\omega ),\theta>=<\omega, F_{h}(\theta)>$.\\

Now we begin our computation of the renormalized volume. At a point $p \in \partial M$, choose coordinate vector fields 
$\{\partial_{x_{i}},\partial_{\rho}\}=\{\overline{X_{s}}\}$ for $\overline{g}$ by exponentiating first on the boundary, then into the manifold. This means
$\{\partial_{x_{i}}\}$ forms a normal coordinate chart for $(\partial M,\gamma)$ cenctered at $p$ and are extended into the interior of $M$ along geodesics
normal to $\partial M$. In this way, with $\rho$ as the fourth coordinate, we have 
\begin{equation}
\overline{g}_{k4}=\delta_{k4}
\end{equation}
$1\leq k \leq 3$. Denote the Christoffel symbols and components of the curvature tensor of $\overline{g}$ in this coordinate chart by 
$\overline{\Gamma}^{k}_{ij}$ and $\overline{R}^{l}_{ijk}$, respectively. We will study the structure of $g$ using the frame $X_{s}:=\rho \overline{X_{s}}$,
$s=1,\cdots,4$. We use the letters $i,j,k$ to denote indices from 1 to 3 and $s,t,u,v$ to denote indices from 1 to 4. Note that 
$[\overline{X_{s}},\overline{X_{s}}]=0$ for all $i,j$, but 
\begin{equation}
[X_{4},X_{i}]=X_{i},   [X_{i},X_{j}]=0
\end{equation}
for $i,j\leq 3$. Also note that $g_{ij}=g(X_{i},X_{j})=\rho^{2}g(\overline{X_{i}},\overline{X_{j}})=\overline{g_{ij}}$. 
The Levi-Civita connection of $g$ is related
to that of $\overline{g}$ by , see Besse[6]

(add equation)

From this we have the following relation between Christoffel symbols 

\begin{lemma}
With $X_{s}=\rho \overline{X_{s}}$, then
\begin{equation}
\Gamma^{u}_{st}=\rho \overline{\Gamma^{u}_{st}}-\delta_{su}\delta_{t4}+\delta_{u4}\overline{g_{st}}.
\end{equation}
From the relation $\overline{g}_{k4}=\delta_{k4}$, we have furthermore 
\begin{equation}
\Gamma^{k}_{i4}=\frac{1}{2}\rho \overline{g}^{kl}\frac{\partial}{\partial \rho}\overline{g}_{li}-\delta_{ik},
\Gamma^{k}_{4i}=\frac{1}{2}\rho \overline{g}^{kl}\frac{\partial}{\partial \rho}\overline{g}_{li},
\Gamma^{4}_{ij}=\overline{g}_{ij}-\frac{1}{2}\rho\frac{\partial}{\partial \rho}\overline{g}_{ij}.
\end{equation}

\end{lemma}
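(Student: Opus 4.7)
The plan is to apply the conformal-change-of-connection formula to $\overline{g}=\rho^{2}g$ and then rewrite the resulting identity in the rescaled frame $\{X_{s}\}$. Since $g=\rho^{-2}\overline{g}$, the Levi-Civita connections are related by
\[
\nabla_{V}W=\overline{\nabla}_{V}W-V(\ln\rho)W-W(\ln\rho)V+\overline{g}(V,W)\,\overline{\nabla}(\ln\rho).
\]
In the coordinate chart at hand the key simplifications are $\overline{X}_{s}(\rho)=\delta_{s4}$ and, thanks to $\overline{g}_{k4}=\delta_{k4}$, $\overline{\nabla}\rho=\overline{X}_{4}$, so $\overline{\nabla}(\ln\rho)=\rho^{-1}\overline{X}_{4}$.

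Next I would compute $\nabla_{X_{s}}X_{t}$ directly. Two applications of the Leibniz rule to $X_{s}=\rho\overline{X}_{s}$ and $X_{t}=\rho\overline{X}_{t}$ give
\[
\nabla_{X_{s}}X_{t}=\rho\,\delta_{s4}\overline{X}_{t}+\rho^{2}\,\nabla_{\overline{X}_{s}}\overline{X}_{t},
\]
and substituting the conformal identity causes the $\rho\delta_{s4}\overline{X}_{t}$ term to cancel with the corresponding piece of $-\overline{X}_{s}(\ln\rho)\overline{X}_{t}$, leaving
\[
\nabla_{X_{s}}X_{t}=\rho^{2}\overline{\Gamma}^{u}_{st}\overline{X}_{u}-\rho\,\delta_{t4}\overline{X}_{s}+\rho\,\overline{g}_{st}\overline{X}_{4}.
\]
Converting back via $\rho\overline{X}_{u}=X_{u}$ and collecting coefficients of $X_{u}$ yields the first formula. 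It is worth noting that $\{X_{s}\}$ is non-holonomic (indeed $[X_{4},X_{i}]=X_{i}$), so the $\Gamma^{u}_{st}$ here are connection coefficients, not coordinate Christoffel symbols, and in particular need not be symmetric in $(s,t)$; the asymmetry between $s$ and $t$ in the stated formula is consistent with the torsion-free identity $\nabla_{X_{s}}X_{t}-\nabla_{X_{t}}X_{s}=[X_{s},X_{t}]$, which serves as a useful sanity check.

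For the explicit formulas in the second display, the observation is that $\overline{g}_{k4}=\delta_{k4}$ makes $\overline{g}$ block-diagonal with constant $(4,4)$-entry, so $\overline{g}^{k4}=\delta_{k4}$ and all derivatives $\partial_{s}\overline{g}_{v4}$ vanish. Plugging this into the standard coordinate formula for $\overline{\Gamma}^{u}_{st}$ immediately gives $\overline{\Gamma}^{k}_{i4}=\overline{\Gamma}^{k}_{4i}=\tfrac{1}{2}\overline{g}^{kl}\partial_{\rho}\overline{g}_{li}$ and $\overline{\Gamma}^{4}_{ij}=-\tfrac{1}{2}\partial_{\rho}\overline{g}_{ij}$. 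Substituting these into the first formula and tracking which Kronecker deltas survive (using $\delta_{k4}=0$ and $\overline{g}_{i4}=0$ for $k,i\leq 3$) produces the three stated identities.

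No genuine obstacle is expected: this is a bookkeeping calculation. The only real pitfall is forgetting the non-holonomicity of $\{X_{s}\}$ and hence wrongly imposing symmetry of $\Gamma^{u}_{st}$ in its lower indices; once the conformal formula and the Leibniz rule are set up, the rest is substitution.
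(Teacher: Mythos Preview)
Your proposal is correct and follows essentially the same route the paper indicates: the paper states that the lemma follows from the conformal-change-of-connection formula in Besse together with the standard Christoffel symbol formula, and defers to Albin for details, which is exactly the computation you carry out. Your remark on the non-holonomicity of $\{X_{s}\}$ is a helpful clarification but not a departure from the paper's approach.
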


The proof is a simple computation from the formula of Christoffel symbols. See Albin [1].
\begin{remark}
From the lemma, we see immediately that there is no first order term in $\rho$ in the expansion of $\Gamma^{u}_{st}$. Note also that 
  $\Gamma^{u}_{st}=0$ if at least two of $\{s,t,u\}$ are 4.\\
\end{remark}
For curvature, we have the following

\begin{lemma}
Let $(M,g)$ be an AH manifold, and $\rho$ a special bdf. Assume that the metric $\overline{g}$ has an expansion of the form 
\begin{equation}
\overline{g}=d\rho^{2}+\gamma+\rho^{2}g^{(2)}+\rho^{3}g^{(3)}+o(\rho^{3})
\end{equation}
and let $P$ be a full contraction of the curvature and its covariant derivatives.  Then it has a similar expansion

\begin{equation}
P=P^{(0)}+\rho^{2}P^{(2)}+\rho^{3}P^{(3)}+o(\rho^{3}).
\end{equation}

\end{lemma}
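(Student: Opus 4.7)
The plan is to prove the vanishing of the $\rho^1$-coefficient of $P$ via an inductive statement in the frame $X_s = \rho\overline{X_s}$. Fix an arbitrary $p \in \partial M$ and work in $\gamma$-normal coordinates centered at $p$. I will prove that for every $k \geq 0$ the components of $\nabla^k R$ in the frame $X_s$, evaluated at $(x=0,\rho)$, have Taylor expansions in $\rho$ with no $\rho^1$ term, and that the $\rho^0$-part $(\nabla^k R)^{(0)}(x)$ is a polynomial expression in $\Gamma^{(0)u}_{st}(x) = -\delta_{su}\delta_{t4} + \delta_{u4}\gamma_{st}(x)$. Since $P$ is a polynomial in the components of $\nabla^{k_j}R$ together with $g_{st}=\overline{g}_{st}$ and $g^{st}=\overline{g}^{st}$ (all of which, at $(0,\rho)$, lack a $\rho^1$ term), it follows that $P(p,\rho)=P(0,\rho)$ has no $\rho^1$ term; as $p$ is arbitrary, $P^{(1)}\equiv 0$.

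For the base case, the preceding lemma and remark give $\Gamma^u_{st}$ no $\rho^1$ term: the factor of $\rho$ in $\Gamma^u_{st} = \rho\overline{\Gamma}^u_{st} - \delta_{su}\delta_{t4} + \delta_{u4}\overline{g}_{st}$ promotes the $\rho^1$ term of $\overline{\Gamma}$ (arising from $\partial_\rho\overline{g}$) into a $\rho^2$ term of $\Gamma$. The curvature in this frame reads
\[
R^u_{tvw} = X_v\Gamma^u_{wt} - X_w\Gamma^u_{vt} + \Gamma^u_{vr}\Gamma^r_{wt} - \Gamma^u_{wr}\Gamma^r_{vt} - c^r_{vw}\Gamma^u_{rt},
\]
with constants $c^r_{vw}$ determined by $[X_4,X_i]=X_i$ and $[X_i,X_j]=0$. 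The $\Gamma\Gamma$ and $c\Gamma$ pieces have no $\rho^1$ term and contribute a polynomial in $\Gamma^{(0)}$ to $R^{(0)}$. For the $X_s\Gamma$ pieces, the extra factor of $\rho$ in $X_s=\rho\overline{X_s}$ is crucial: when $s=4$, $\partial_\rho\Gamma$ has no $\rho^0$ term since $\Gamma$ has no $\rho^1$ term; when $s=i$, $\partial_{x_i}\Gamma^{(0)}(0) = \delta_{u4}\partial_{x_i}\gamma(0)=0$ in normal coordinates. Hence $X_s\Gamma|_{(0,\rho)}$ has no $\rho^1$ term and no contribution to $R^{(0)}$.

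The inductive step writes $\nabla_s T = X_s T + \Gamma\ast T$ for $T=\nabla^k R$. By the hypothesis $T(0,\rho)$ has no $\rho^1$ term, so $X_4 T|_{(0,\rho)} = \rho\partial_\rho T(0,\rho)$ has none either. Since $T^{(0)}$ is polynomial in $\Gamma^{(0)}$ and $\partial_{x_i}\Gamma^{(0)}(0)=0$, the chain rule yields $\partial_{x_i}T^{(0)}(0)=0$, whence $X_i T|_{(0,\rho)} = \rho\partial_{x_i}T|_{(0,\rho)}$ likewise has no $\rho^1$ term. The piece $\Gamma\ast T$ inherits the property from its factors, and contributes $\Gamma^{(0)}\ast T^{(0)}$ to $(\nabla^{k+1}R)^{(0)}$, still polynomial in $\Gamma^{(0)}$. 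This closes the induction.

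The main obstacle is controlling the spatial derivatives $X_i = \rho\partial_{x_i}$: as functions of $(x,\rho)$, components of the curvature and its covariant derivatives do carry $\rho^1$ terms in general, so the argument must be carried out pointwise at each $p$ in normal coordinates rather than as an identity of functions. The structural fact that each $(\nabla^k R)^{(0)}$ depends on the boundary geometry only through $\gamma$ itself (via $\Gamma^{(0)}$) is exactly what lets the normal-coordinate condition $\partial_{x_i}\gamma(p)=0$ annihilate the would-be $\rho^1$ contributions at the base point.
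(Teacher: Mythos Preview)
Your proof is correct and rests on the same framework as the paper's: both work in the rescaled frame $X_s=\rho\overline{X}_s$ at a boundary point $p$ in $\gamma$-normal coordinates, and both invoke Lemma~4 to see that $\Gamma^u_{st}$ has no $\rho^1$ term at $p$. The paper then verifies directly, by a case split on how many indices equal $4$, that each component $R_{stuv}$ has no $\rho^1$ term, and dismisses the covariant derivatives with ``the others are similar.'' Your argument replaces this case analysis by a uniform induction on $k$ in $\nabla^{k}R$, carried by the structural observation that $(\nabla^{k}R)^{(0)}(x)$ is a polynomial in $\Gamma^{(0)}(x)=-\delta_{su}\delta_{t4}+\delta_{u4}\gamma_{st}(x)$; this is exactly what is needed to kill the potential $\rho^1$ contribution from $X_i=\rho\partial_{x_i}$ via $\partial_{x_i}\gamma(0)=0$. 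Your version therefore actually carries out the step the paper leaves implicit, and it also correctly inserts the structure-constant term $c^r_{vw}\Gamma^u_{rt}$ coming from $[X_4,X_i]=X_i$, which the paper's display for $R_{stuv}$ omits. What the paper's explicit case analysis buys, on the other hand, is the concrete identities $(R_{ijkl})^{(0)}$ and $(R_{ijkl})^{(3)}$ used downstream in the proof of Theorem~7; your inductive argument establishes the lemma but does not produce those formulas.
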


\begin{proof}
We only have to consider the curvature tensor $R_{stuv}$, and its first covariant derivatives. The others are similar. Recall the formula
\begin{equation}
R_{stuv}=g_{sw}R^{w}_{tuv}=g_{sw}(\partial_{u}\Gamma^{w}_{tv}-\partial_{v}\Gamma^{w}_{tu}+\Gamma^{n}_{tv}\Gamma^{w}_{nu}-\Gamma^{n}_{tu}\Gamma^{w}_{nv}).
\end{equation}
We divide the consideration into three cases.
\begin{enumerate}
\item If $1\leq s,t,u,v\leq 3$, replace $s,t,u,v$ by $i,j,k,l$, then 
\begin{eqnarray}
R_{ijkl}&=&\sum^{3}_{m=1}g_{im}(\partial_{k}\Gamma^{m}_{jl}-\partial_{l}\Gamma^{m}_{jk}+\Gamma^{n}_{jl}\Gamma^{m}_{nk}-\Gamma^{n}_{jk}\Gamma^{m}_{nl})\nonumber\\
      &=&\sum^{3}_{m=1}\overline{g}_{im}[\rho\overline{\partial}_{k}(\rho\overline{\Gamma}^{m}_{jl})
         -\rho\overline{\partial}_{l}(\rho\overline{\Gamma}^{m}_{jk})+\nonumber\\          
      & &\rho^{2}\sum^{3}_{n=1}(\overline{\Gamma}^{n}_{jl}\overline{\Gamma}^{m}_{nk}-\overline{\Gamma}^{n}_{jk}\overline{\Gamma}^{m}_{nl})+
          \Gamma^{4}_{jl}\Gamma^{m}_{4k}-\Gamma^{4}_{jk}\Gamma^{m}_{4l}]\nonumber
\end{eqnarray}

By the above lemma, there is no first order term in $\rho$ appeared.

\item If one of the $s,t,u,v$ is 4, then we can arrange the indices to reduce to computing
\begin{equation*}
R_{ijk4}=-g_{44}(\partial_{i}\Gamma^{4}_{jk}-\partial_{j}\Gamma^{4}_{ik}+\Gamma^{n}_{jk}\Gamma^{4}_{ni}-\Gamma^{n}_{ik}\Gamma^{4}_{nj}),
\end{equation*}
which has no first order term in $\rho$ by the above lemma about $\Gamma^{4}_{ij}$.

\item If two of $s,t,u,v$ are 4, we compute 
\begin{equation*}
R_{i4k4}=-g_{44}R^{4}_{ik4}=-g_{44}(\partial_{k}\Gamma^{4}_{i4}-\partial_{4}\Gamma^{4}_{ik}+\Gamma^{n}_{i4}\Gamma^{4}_{nk}-\Gamma^{n}_{ik}\Gamma^{4}_{n4}).
\end{equation*}
Now $\Gamma^{4}_{i4}=0$ an $\partial_{4}\Gamma^{4}_{ik}=\rho\partial_{\rho}(\overline{g}_{ik}-1/2\rho\partial_{\rho}\overline{g}_{ik})$, which has no
first order term in $\rho$.

\end{enumerate}
Hence all curvature tensors $R_{stuv}$ has no first order term in $\rho$ in the expansion. This implies all full contractions has no $\rho^{1}$ term. This 
completes the proof.

\end{proof}

For later use, we will need some explicit formulas for the coefficients in the expansion of $\Gamma^{s}_{tu}$ and $R_{stuv}$ in orders of $\rho$. 
It is a simple computation from lemma 4. We list the following that will be used in the proof of the main theorem of this section. We have

\begin{equation}
(\Gamma^{4}_{ij})^{(0)}=\gamma_{ij} 
\end{equation}
\begin{equation}
(\Gamma^{4}_{ij})^{(3)}=-\frac{1}{2}g^{(3)}_{ij} 
\end{equation}
\begin{eqnarray}
R_{ijkl}^{(0)}&=&(\Gamma^{4}_{ik}\Gamma^{4}_{jl}-\Gamma^{4}_{il}\Gamma^{4}_{jk})^{(0)}\nonumber \\
           &=&\gamma_{ik}\gamma_{jl}-\gamma_{il}\gamma_{jk}\\
R_{ijkl}^{(3)}&=&(\Gamma^{4}_{ik}\Gamma^{4}_{jl}-\Gamma^{4}_{il}\Gamma^{4}_{jk})^{(3)}\nonumber \\
           &=&\frac{1}{2}(\gamma_{ik}g^{(3)}_{jl}+\gamma_{jl}g^{(3)}_{ik}-\gamma_{il}g^{(3)}_{jk}-\gamma_{jk}g^{(3)}_{il})
\end{eqnarray}

We also need the relation between $g^{(i)}_{jk}$ and $v^{(i)}$. Recalling that $g^{(j)}=\frac{1}{j!}\mathcal{L}_{\overline{\nabla}\rho}^{(j)}g_{\rho}|_{\partial M}$,
 we have 
\begin{equation}
g^{(3)}_{ij}=-\frac{1}{3}\frac{\partial}{\partial\rho}\overline{R}_{i4j4}|_{\rho=0}
\end{equation}

and
\begin{equation}
v^{(3)}=-\frac{1}{6}\frac{\partial}{\partial\rho}\overline{r}_{4j}|_{\rho=0}
\end{equation}
 Hence in particular,

\begin{equation}
tr_{\gamma}g^{(3)}=2v^{(3)}
\end{equation}

Recall the Chern-Gauss-Bonnet formula from Chern [9]. We have for $\epsilon>0$, $M_{\epsilon}:=\{\rho \geq \epsilon\}$,

\begin{equation}
\int_{M_{\epsilon}}Pff+\int_{\rho=\epsilon}II=\chi(M_{\epsilon})
\end{equation}

where $\chi(M_{\epsilon})$ is the interior Euler characteristic, $Pff$ is the Pfaffian and $II$ is the second fundamental form of $\{\rho=\epsilon\}$,
which is given from Chern [9] in dimension four by 

\begin{equation}
II=\frac{1}{\pi^{2}}(\frac{1}{1\cdot 3 \cdot 2^{2}}\Phi_{0}+\frac{-1}{1\cdot 1 \cdot 2^{3}}\Phi_{1})
\end{equation}

where 

\begin{eqnarray}
\Phi_{0}=\sum_{\sigma \in \mathcal{S}_{3}}\epsilon(\sigma)\omega_{\sigma_{1} 4}\omega_{\sigma_{2} 4}\omega_{\sigma_{3} 4}\\
\Phi_{1}=\sum_{\sigma \in \mathcal{S}_{3}}\epsilon(\sigma)\Omega_{\sigma_{1} \sigma_{2}}\omega_{\sigma_{3}4}
\end{eqnarray}

Here $\omega_{\sigma_{i}4}$ is the connection form and $\Omega_{\sigma_{i}\sigma_{j}}$ is the curvature form.\\

Now for $\epsilon$ sufficiently small, $\chi(M_{\epsilon})=\chi(M)$. The right hand side of (23) is independent of $\epsilon$, neither does the left
hand side, and thus 

\begin{equation}
\sideset{^H}{}\int Pff+FP_{\epsilon=0}\int_{\rho=\epsilon}II=\chi(M),
\end{equation}
where $\sideset{^H}{}\int$ stands for Hadamard regularization.
In terms of the double forms, the Pfaffian of a four dimensional manifold is

\begin{equation}
Pff=\frac{1}{(2\pi)^{2}}\frac{c^{4}R^{2}}{4!2!}dvol.
\end{equation}

A general formula holds for even dimensional manifolds, see Gray [17]. To simplify the expression of $Pff$ in dimension four, we have the orthogonal 
decomposition of curvature tensor, see Besse [6]

\begin{equation*}
R=\frac{s}{24}g\cdot g+\frac{1}{2}z\cdot g +W
\end{equation*}

where $z=r-\frac{s}{4}g$ is the trace free part of the Ricci tensor, and $W$ is the Weyl tensor. So 

\begin{equation}
Pff=\frac{1}{8\pi^{2}}(|W|^{2}-2|r|^{2}+\frac{2}{3}s^{2})dvol_{g}
\end{equation}

Since $|W|^{2}$ is a pointwise conformal invariant, we have in dimension four,

\begin{equation}
\sideset{^H}{}\int Pff=\frac{1}{8\pi^{2}}\int_{M}|W|^{2}dvol_{g}+\frac{1}{12\pi^{2}}\sideset{^H}{}\int_{M}(s^{2}-3|r|^{2})dvol_{g}
\end{equation}

Now we state and prove the main theorem of this section.

\begin{thm}
Let (M,g) be a four dimensional conformall compact AH manifold with special bdf $\rho$ so that in the compactified matric $\overline{g}=\rho^{2} g$,
$\partial M$ is totally geodesic. Then we have
\begin{equation}
\sideset{^H}{}\int Pff=\chi (M),
\end{equation}
or equivalently,
\begin{equation}
\frac{1}{8\pi^{2}}\int_{M}|W|^{2}dvol_{g}+\frac{1}{12\pi^{2}}\sideset{^H}{}\int_{M}(s^{2}-3|r|^{2})dvol_{g}=\chi (M).
\end{equation}

\end{thm}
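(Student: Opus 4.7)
The theorem asserts $\sideset{^H}{}\int Pff = \chi(M)$, so by the Chern--Gauss--Bonnet identity (25) it is equivalent to showing
$$FP_{\epsilon=0}\int_{\rho=\epsilon}II = 0.$$
The plan is therefore to expand the boundary integrand $II = \frac{1}{24\pi^2}\Phi_0 - \frac{1}{8\pi^2}\Phi_1$ in powers of $\rho$, pull back to $\{\rho=\epsilon\}$, and show that the $\epsilon^0$ coefficient of $\int_{\rho=\epsilon}II$ is zero.

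First, I would choose an orthonormal frame for $g$ adapted to the $\rho$-foliation: namely $X_4=\rho\partial_\rho$, which is a unit normal to $\{\rho=\epsilon\}$ by the special bdf condition, together with $X_1,X_2,X_3$ tangent to the level sets and orthonormal for $g$. In this frame, express the connection 1-forms $\omega_{i4}$ and curvature 2-forms $\Omega_{ij}$ via Lemmas 4 and 5. The form $\omega_{i4}$ encodes the second fundamental form of $\{\rho=\epsilon\}\subset(M,g)$, so its leading piece at $\rho=0$ is governed by $(\Gamma^4_{ij})^{(0)}=\gamma_{ij}$ (the ``horospherical'' contribution), with first nontrivial correction of order $\rho^2$ and an order-$\rho^3$ piece controlled by $-\tfrac{1}{2}g^{(3)}_{ij}$ from (16). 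Similarly $\Omega_{ij}$ has leading ``hyperbolic'' part $R^{(0)}_{ijkl}=\gamma_{ik}\gamma_{jl}-\gamma_{il}\gamma_{jk}$, with $\rho^3$ correction (19).

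Next, using that no $\rho^1$ term appears in any Christoffel, curvature, or metric expansion (Lemma 5 together with the remark after Lemma 4, which ultimately rests on $g^{(1)}=0$), plug the expansions of $\omega_{i4}$ and $\Omega_{ij}$ into $\Phi_0$ and $\Phi_1$. Collecting powers of $\rho$ and combining with the expansion of the induced volume form on the level set, the integral $\int_{\rho=\epsilon}II$ expands as $A\epsilon^{-3}+C\epsilon^{-1}+D+o(1)$, with no $\epsilon^{-2}$ nor $\epsilon^0$ term of intermediate parity dropping in. The divergent coefficients $A,C$ should reduce to pointwise universal expressions in $\gamma$ and its intrinsic curvature, matching what the hyperbolic model produces; these are to be compared with the divergent expansion of $\sideset{^H}{}\int Pff$ itself.

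The main obstacle is showing that the finite coefficient $D$ vanishes. This is precisely where the non-Einstein nature of $g$ enters: the $\rho^3$ parts $g^{(3)}$ and $R^{(3)}$ produce genuinely new contributions compared to Albin's or Anderson's Einstein computations. The contributions to $D$ come only from mixing factors whose total $\rho$-order sums to three: a single $g^{(3)}$ insertion into $\Phi_0$, a single $R^{(3)}$ insertion into $\Phi_1$, or a $v^{(3)}$ correction from the induced volume form. I expect that the antisymmetrization over $\mathcal{S}_3$ collapses these to expressions involving only the trace $\mathrm{tr}_\gamma g^{(3)}$, which by (22) equals $2v^{(3)}$. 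The resulting identity should force a pointwise cancellation (or, if not pointwise, then after integrating over $\partial M$ using that the $\gamma$-dependent divergences are total derivatives on a closed 3-manifold). A secondary issue is pinning down the sign and coefficient conventions in Chern's formula (24) so that the numerical factors $\tfrac{1}{24}$ and $-\tfrac{1}{8}$ combine correctly with the Pfaffian normalization to yield exact cancellation rather than an anomaly term; this requires bookkeeping but no new ideas.
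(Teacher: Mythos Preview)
Your proposal is correct and follows essentially the same approach as the paper: reduce to showing $FP_{\epsilon=0}\int_{\rho=\epsilon}II=0$, expand $\Phi_0$ and $\Phi_1$ in $\rho$ using the formulas (16)--(19) for $(\Gamma^4_{ij})^{(0)},(\Gamma^4_{ij})^{(3)},R^{(0)}_{ijkl},R^{(3)}_{ijkl}$ together with the volume expansion $1+v^{(2)}\rho^2+v^{(3)}\rho^3+\cdots$, and verify that the $\rho^0$ coefficient vanishes. The paper confirms your expectation that the cancellation is \emph{pointwise} and occurs for $\Phi_0$ and $\Phi_1$ separately, each reducing (after the $\mathcal{S}_3\times\mathcal{S}_3$ antisymmetrization and the double-form contraction identities) to $-3\,\mathrm{tr}_\gamma g^{(3)}+6v^{(3)}=0$ by (22); no integration over $\partial M$ is needed, and the frame used is the coordinate frame $X_s=\rho\,\overline{X}_s$ rather than an orthonormal one.
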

\begin{proof}
   It suffices to show that the second term in (27) is zero. This is accompllished by a careful computation about the coefficient of the 0th order term 
in the expansion of $\Phi_{0}$ and $\Phi_{1}$.

   For $\Phi_{0}$, we have at $\rho$
\begin{eqnarray*}
\Phi_{0}&=&\sum_{\sigma \in \mathcal{S}_{3}}\epsilon(\sigma)\omega_{\sigma_{1} 4}\omega_{\sigma_{2} 4}\omega_{\sigma_{3} 4}\nonumber\\
       &=&\sum_{\sigma \in \mathcal{S}_{3}}\epsilon(\sigma)\Gamma^{4}_{\sigma_{1}i}\omega^{i}\wedge\Gamma^{4}_{\sigma_{2}j}\omega^{j}
           \wedge\Gamma^{4}_{\sigma_{3}k}\omega^{k}\nonumber\\
       &=&\sum_{\sigma,\eta \in \mathcal{S}_{3}}\epsilon(\sigma)\epsilon(\eta)\Gamma^{4}_{\sigma_{1}\eta_{1}}\Gamma^{4}_{\sigma_{2}\eta_{2}}
           \Gamma^{4}_{\sigma_{3}\eta_{3}}\omega^{1}\wedge\omega^{2}\wedge\omega^{3}\nonumber\\
       &=&\sum_{\sigma,\eta \in \mathcal{S}_{3}}\epsilon(\sigma)\epsilon(\eta)\Gamma^{4}_{\sigma_{1}\eta_{1}}\Gamma^{4}_{\sigma_{2}\eta_{2}}
           \Gamma^{4}_{\sigma_{3}\eta_{3}}dvol_{g_{\rho}}\nonumber\\
       &=&\rho^{-3}\sum_{\sigma,\eta \in \mathcal{S}_{3}}\epsilon(\sigma)\epsilon(\eta)\Gamma^{4}_{\sigma_{1}\eta_{1}}\Gamma^{4}_{\sigma_{2}\eta_{2}}
           \Gamma^{4}_{\sigma_{3}\eta_{3}}(\frac{\det g_{\rho}}{\det \gamma})^{1/2}dvol_{\gamma}\nonumber\\
       &=&\rho^{-3}\sum_{\sigma,\eta \in \mathcal{S}_{3}}\epsilon(\sigma)\epsilon(\eta)\Gamma^{4}_{\sigma_{1}\eta_{1}}\Gamma^{4}_{\sigma_{2}\eta_{2}}
           \Gamma^{4}_{\sigma_{3}\eta_{3}}(1+v^{(2)}\rho^{2}+v^{(3)}\rho^{3}+\cdots)dvol_{\gamma}\nonumber\\
\end{eqnarray*}

The coefficient in the constant term of the expansion of $\Phi_{0}$ is
\begin{eqnarray*}
& &\sum_{\sigma,\eta \in \mathcal{S}_{3}}\epsilon(\sigma)\epsilon(\eta)(\Gamma^{4}_{\sigma_{1}\eta_{1}}\Gamma^{4}_{\sigma_{2}\eta_{2}}
           \Gamma^{4}_{\sigma_{3}\eta_{3}})^{(3)}+v^{(3)}\sum_{\sigma,\eta \in \mathcal{S}_{3}}\epsilon(\sigma)\epsilon(\eta)(\Gamma^{4}_{\sigma_{1}\eta_{1}}
           \Gamma^{4}_{\sigma_{2}\eta_{2}}\Gamma^{4}_{\sigma_{3}\eta_{3}})^{(0)}\\
&=&\sum_{\sigma,\eta \in \mathcal{S}_{3}}\epsilon(\sigma)\epsilon(\eta)(\Gamma^{4(3)}_{\sigma_{1}\eta_{1}}\Gamma^{4(0)}_{\sigma_{2}\eta_{2}}
        \Gamma^{4(0)}_{\sigma_{3}\eta_{3}}+\Gamma^{4(0)}_{\sigma_{1}\eta_{1}}\Gamma^{4(3)}_{\sigma_{2}\eta_{2}}
        \Gamma^{4(0)}_{\sigma_{3}\eta_{3}}+\Gamma^{4(0)}_{\sigma_{1}\eta_{1}}\Gamma^{4(0)}_{\sigma_{2}\eta_{2}}
        \Gamma^{4(3)}_{\sigma_{3}\eta_{3}})\\
& &+v^{(3)}\sum_{\sigma,\eta \in \mathcal{S}_{3}}\epsilon(\sigma)\epsilon(\eta)\Gamma^{4(0)}_{\sigma_{1}\eta_{1}}\Gamma^{4(0)}_{\sigma_{2}\eta_{2}}
           \Gamma^{4(0)}_{\sigma_{3}\eta_{3}}\\
&=&-\frac{3}{2}\sum_{\sigma,\eta \in \mathcal{S}_{3}}\epsilon(\sigma)\epsilon(\eta)\gamma_{\sigma_{1}\eta_{1}}\gamma_{\sigma_{2}\eta_{2}}g^{(3)}_{\sigma_{3}\eta_{3}}
          +\frac{1}{6}v^{(3)}c^{3}(\gamma^{3})\\
&=&-\frac{3}{2}(\frac{1}{6}c^{3}(\gamma^{2}g^{(3)}))+\frac{1}{6}v^{(3)}c^{3}(\gamma^{3})\\
&=&-3tr_{\gamma}g^{(3)}+6v^{(3)}\\
&=&0
\end{eqnarray*}

where in the last three equalities, we have used the expansion of $\Gamma^{4}_{ij}$ and a formula about consecutive application of the contraction map
$c$ on double forms. See Labbi [19].\\
For $\Phi_{1}$, we have at $\rho$
\begin{eqnarray*}
\Phi_{1}&=&\sum_{\sigma \in \mathcal{S}_{3}}\epsilon(\sigma)\Omega_{\sigma_{1}\sigma_{2}}\omega_{\sigma_{3}4}\\
       &=&-\frac{1}{2}\sum_{\sigma \in \mathcal{S}_{3}}\epsilon(\sigma)R_{\sigma_{1}\sigma_{2}ij}\omega^{i}\wedge\omega^{j}\wedge\Gamma^{4}_{\sigma_{3}k}\omega^{k}\\
       &=&-\frac{1}{2}\sum\epsilon(\sigma)\epsilon(\eta)R_{\sigma_{1}\sigma_{2}\eta_{1}\eta_{2}}\Gamma^{4}_{\sigma_{3}\eta_{3}}
          \omega^{\eta_{1}}\wedge\omega^{\eta_{2}}\wedge\omega^{\eta_{3}}\\
       &=&-\frac{1}{2}\rho^{-3}\sum\epsilon(\sigma)\epsilon(\eta)R_{\sigma_{1}\sigma_{2}\eta_{1}\eta_{2}}\Gamma^{4}_{\sigma_{3}\eta_{3}}
          (1+v^{(2)}\rho^{2}+v^{(3)}\rho^{3}+\cdots)dvol_{\gamma}\\
\end{eqnarray*}

The coefficient in the constant term of the expansion of $\Phi_{1}$ is
\begin{eqnarray*}
& &-\frac{1}{2}[\sum\epsilon(\sigma)\epsilon(\eta)(R_{\sigma_{1}\sigma_{2}\eta_{1}\eta_{2}}\Gamma^{4}_{\sigma_{3}\eta_{3}})^{(3)}
                +v^{(3)}\sum\epsilon(\sigma)\epsilon(\eta)(R_{\sigma_{1}\sigma_{2}\eta_{1}\eta_{2}}\Gamma^{4}_{\sigma_{3}\eta_{3}})^{(0)}]\\
&=&-\frac{1}{2}[\sum\epsilon(\sigma)\epsilon(\eta)(R_{\sigma_{1}\sigma_{2}\eta_{1}\eta_{2}}^{(0)}\Gamma^{4(3)}_{\sigma_{3}\eta_{3}}
                 +R_{\sigma_{1}\sigma_{2}\eta_{1}\eta_{2}}^{(3)}\Gamma^{4(0)}_{\sigma_{3}\eta_{3}})
                +v^{(3)}\sum\epsilon(\sigma)\epsilon(\eta)R_{\sigma_{1}\sigma_{2}\eta_{1}\eta_{2}}^{(0)}\Gamma^{4(0)}_{\sigma_{3}\eta_{3}}]\\
\end{eqnarray*}

Utilizing (16)-(19), the above equals to 

\begin{eqnarray*}
&=&-\frac{1}{4}\sum\epsilon(\sigma)\epsilon(\eta)[(\gamma_{\sigma_{1}\eta_{1}}\gamma_{\sigma_{2}\eta_{2}}-\gamma_{\sigma_{1}\eta_{2}}\gamma_{\sigma_{2}\eta_{1}})
                                             g^{(3)}_{\sigma_{3}\eta_{3}}\\
& &+(\gamma_{\sigma_{1}\eta_{1}}g^{(3)}_{\sigma_{2}\eta_{2}}+\gamma_{\sigma_{2}\eta_{2}}g^{(3)}_{\sigma_{1}\eta_{1}}-
                  \gamma_{\sigma_{1}\eta_{2}}g^{(3)}_{\sigma_{2}\eta_{1}}-\gamma_{\sigma_{2}\eta_{1}}g^{(3)}_{\sigma_{1}\eta_{2}})\gamma_{\sigma_{3}\eta_{3}}]\\
& &+\frac{1}{2}v^{(3)}\sum\epsilon(\sigma)\epsilon(\eta)(\gamma_{\sigma_{1}\eta_{1}}\gamma_{\sigma_{2}\eta_{2}}-\gamma_{\sigma_{1}\eta_{2}}\gamma_{\sigma_{2}\eta_{1}})
                   \gamma_{\sigma_{3}\eta_{3}}\\
&=&-\frac{3}{2}\sum\epsilon(\sigma)\epsilon(\eta)\gamma_{\sigma_{1}\eta_{1}}\gamma_{\sigma_{2}\eta_{2}}g^{(3)}_{\sigma_{3}\eta_{3}}\\
& &+v^{(3)}\sum\epsilon(\sigma)\epsilon(\eta)\gamma_{\sigma_{1}\eta_{1}}\gamma_{\sigma_{2}\eta_{2}}\gamma_{\sigma_{3}\eta_{3}}\\
&=&-\frac{3}{2}(\frac{1}{6}c^{3}(\gamma^{2}g^{(3)}))+\frac{1}{6}v^{(3)}c^{3}(\gamma^{3})\\
&=&-3tr_{\gamma}g^{(3)}+6v^{(3)}\\
&=&0
\end{eqnarray*}

So the coefficient in the constant term of the expansion of $II$ is zero. Hence $FP_{\epsilon=0}\int_{\rho=\epsilon}II=0$. This completes the proof.
\end{proof}

\begin{remark}
Expanding the integrand $(s^{2}-3|r|^{2})dvol_{g}$ of the renormalized integral, since $g$ is AH, the zeroth order term is a constant. This shows that the 
above theorem gives a formula for the renormalized volume for the AH metric $g$. Furthermore, if (M,g) is assumed to be Poincare-Einstein with 
normalized Ricci tensor $r=-3g$, then $s^{2}-3|r|^{2}=36$, the above formula coincides with that of Albin's 
and differs by a constant to that of Anderson's.
\end{remark}

\section{Varying the asymptotically hyperbolic metric}

In this section we consider the first variation of the renormalized integral appeared in Theorem 7 under the assumption that 
the boundary metric $\gamma$ is fixed. Instead of doing the computation directly, we will first make some modification. Recalling the trace free part of
the Ricci tensor $z=r-\frac{s}{4}g$, it has norm $|z|^{2}=|r|^{2}-\frac{1}{4}s^{2}$. Hence $s^{2}-3|r|^{2}=\frac{1}{4}s^{2}-3|z|^{2}$. The formula in Theorem 7 
is then rewritten as 
\begin{equation}
\frac{1}{8\pi^{2}}\int_{M}|W|^{2}dvol_{g}+\frac{1}{48\pi^{2}}\sideset{^H}{}\int_{M}s^{2}dvol_{g}-\frac{1}{4\pi^{2}}\sideset{^H}{}\int_{M}|z|^{2}dvol_{g}=\chi (M).
\end{equation}

We will consider the variation of the intrgral $\mathcal{Z}=\sideset{^H}{}\int_{M}|z|^{2}dvol_{g}$, so that the critical points minimize 
the functional 
\begin{equation}
\frac{1}{8\pi^{2}}\int_{M}|W|^{2}dvol_{g}+\frac{1}{48\pi^{2}}\sideset{^H}{}\int_{M}s^{2}dvol_{g}.
\end{equation}

This means that the above funcitonal has a lower bound $\chi (M)$, a topological invariant, at the critical points.

  We need the following formulas for derivatives in the direction $h$ of various 
curvatures expressed in double form.

\begin{lemma} 

\begin{eqnarray}
R'h&=&-\frac{1}{4}(D\tilde{D}+\tilde{D}D)h+\frac{1}{4}F_{h}R\\
r'h&=&-\frac{1}{4}c(D\tilde{D}+\tilde{D}D)h-\frac{1}{4}cF_{h}R+\frac{1}{2}r\circ h+\frac{1}{2}h\circ r\\
s'h&=&-\frac{1}{4}c^{2}(D\tilde{D}+\tilde{D}D)h-<r,h>
\end{eqnarray}
\end{lemma}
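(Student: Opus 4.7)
The plan is to derive (32) from the classical coordinate variation of the Riemann tensor and then to obtain (33) and (34) by iteratively applying the contraction map $c$, tracking how the algebraic term $F_h R$ behaves under contraction.

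For (32), I would begin from the standard variation of Christoffel symbols,
\begin{equation*}
(\Gamma')^k_{ij} = \frac{1}{2}g^{kl}\bigl(\nabla_i h_{jl} + \nabla_j h_{il} - \nabla_l h_{ij}\bigr),
\end{equation*}
substitute into the coordinate expression $R^m_{ijk} = \partial_i\Gamma^m_{jk} - \partial_j\Gamma^m_{ik} + \Gamma\Gamma - \Gamma\Gamma$, and lower the top index with $g_t$ (which produces an extra $h$-dependent term from differentiating $g$). Using the Ricci identity to put mixed second covariant derivatives in symmetric form, the result splits into a piece quadratic in $\nabla$ acting on $h$ and a piece algebraic in $h$ and $R$. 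The derivative piece is identified with $-\frac{1}{4}(D\tilde{D}+\tilde{D}D)h$ by the defining formula (5) of the generalized Hessian on $\mathcal{C}^1$, and the algebraic piece is identified with $\frac{1}{4}F_h R$ by matching its four-term antisymmetry in $(X,Y,Z,W)$ against the defining formula (7).

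For (33), I would apply $c$ to (32). The derivative term passes through trivially, giving $-\frac{1}{4}c(D\tilde{D}+\tilde{D}D)h$. The contraction of $\frac{1}{4}F_h R$ splits: two of the four summands in the definition of $F_h$ produce, upon contraction of an $R$-slot with $h$, composition terms of the shape $r\circ h$ and $h\circ r$, contributing $\frac{1}{2}r\circ h + \frac{1}{2}h\circ r$, while the remaining two stay packaged as $-\frac{1}{4}cF_h R$. Applying $c$ once more, the composition terms collapse via $c(r\circ h) = c(h\circ r) = <r,h>$, and combined with the second contraction of the $F_h R$ piece they yield the single term $-<r,h>$, producing (34).

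The main obstacle will be the algebraic identification in (32): one must verify that the symmetrized second covariant derivatives of $h$ arising from substituting $\Gamma'$ into $R^m_{ijk}$ reassemble exactly into $(D\tilde{D}+\tilde{D}D)h$ with coefficient $-\frac{1}{4}$ under the Labbi conventions of Section 2, and that the curvature-commutator terms produced by the Ricci identity match precisely the four-term operator $F_h R$ of (7) rather than some other tensorial combination of $R$ and $h$. Once these are pinned down, the Ricci and scalar formulas follow by applying $c$ and using the adjointness identity (4) together with $c(g\cdot\omega) = gc\omega + (n-p-q)\omega$ stated in Section 2.
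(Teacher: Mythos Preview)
Your plan for (32) is fine and matches the paper's approach. The gap is in the passage from (32) to (33). You propose to ``apply $c$ to (32)'', but the contraction $c$ is metric-dependent (it involves $g^{ij}$, or equivalently an orthonormal frame that moves with $g$), so $r'h\neq c(R'h)$: in indices, $r_{ab}=g^{cd}R_{cadb}$ gives $(r'h)_{ab}=-h^{cd}R_{cadb}+(c(R'h))_{ab}$. You were careful to note the analogous correction when \emph{lowering} an index in your derivation of (32), but you drop it when \emph{raising} one here. Your assertion that $\frac{1}{4}c(F_hR)$ ``splits'' into $-\frac{1}{4}cF_hR+\frac{1}{2}(r\circ h+h\circ r)$ is not an identity for $c(F_hR)$ alone; the unexplained sign change from $+\frac{1}{4}$ to $-\frac{1}{4}$ and the appearance of the composition terms are precisely what the missing $-h^{cd}R_{cadb}$ contribution would supply. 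The same omission recurs in your step from (33) to (34).

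The paper handles (33) differently: it computes $r'h$ directly from the $(3,1)$ curvature variation $R'h(x,y)z=(\nabla_y\nabla'h)(x,z)-(\nabla_x\nabla'h)(y,z)$ and the metric-independent operator trace $r'h(x,y)=\mathrm{tr}\bigl(z\mapsto R'h(x,z)y\bigr)$, expands in an orthonormal frame, applies the Ricci identity $\nabla^2_{xy}h-\nabla^2_{yx}h=h(R(x,y)\cdot,\cdot)+h(\cdot,R(x,y)\cdot)$, and then symmetrises to obtain all four terms of (33) at once. For (34) the paper does contract twice but explicitly flags that one must take ``into account the derivative of $g$''. Your contraction route can be salvaged, but only after inserting the $-h^{cd}R_{cadb}$ term at the first contraction (and its Ricci-level analogue $-<r,h>$ at the second) and checking the resulting algebra; that is the missing ingredient.
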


\begin{proof}
The first equation is lemma 4.1 in [20]. It can also be computed directly from the formula in Besse [6]. The third equation is obtained by taking 
contraction twice of $R'h$ and by taking into account of the derivative of $g$. Note that $s'h$ also appeared in [20], as $h'_{2}h$ in the main theorem.
Now we prove the second equation.\\
Recall that, the derivatives $\nabla'h$, $R'h$ at $g$ of the Levi-Civita connection and the $(3,1)$-Riemannian curvature tensor are respectively given
by [6]
\begin{eqnarray*}
g(\nabla'h(x,y),z)&=&\frac{1}{2}\{\nabla_{x}h(y,z)+\nabla_{y}h(x,z)-\nabla_{z}h(x,y)\},\\
R'h(x,y)z&=&(\nabla_{y}\nabla'h)(x,z)-(\nabla_{x}\nabla'h)(y,z).
\end{eqnarray*}
By definition, 
\begin{equation*}
r'h(x,y)=tr(z\longmapsto R'h(x,z)y).
\end{equation*} 
Taking an orthonormal frame $\{x_{i}\}$ and noting that differentiation commutes with taking trace, we have 
\begin{eqnarray*}
r'h(x,y)&=&g(R'h(x,x_{i})y,x_{i})\\
        &=&g(\nabla_{x_{i}}\nabla'h(x,y),x_{i})-g(\nabla_{x}\nabla'h(x_{i},y),x_{i})\\
        &=&\frac{1}{2}[\nabla^{2}_{x_{i}x}h(y,x_{i})+\nabla^{2}_{x_{i}y}h(x,x_{i})-\nabla^{2}_{x_{i}x_{i}}h(x,y)\\
        & &-\nabla^{2}_{xx_{i}}h(y,x_{i})-\nabla^{2}_{xy}h(x_{i},x_{i})+\nabla^{2}_{xx_{i}}h(x_{i},y)]\\
        &=&\frac{1}{2}[\nabla^{2}_{x_{i}y}h(x,x_{i})+\nabla^{2}_{xx_{i}}h(x_{i},y)-\nabla^{2}_{x_{i}x_{i}}h(x,y)-\nabla^{2}_{xy}h(x_{i},x_{i})\\
        & &-h(R(x,x_{i})x_{i},y)-h(R(x,x_{i})y,x_{i})]\\
        &=&-\frac{1}{2}D\tilde{D}h(x\wedge x_{i},y\wedge x_{i})-\frac{1}{2}[h(R(x,x_{i})x_{i},y)+h(R(x,x_{i})y,x_{i})]\\
\end{eqnarray*}
Here we have used the identity
\begin{equation*}
\nabla^{2}_{xy}h(z,u)-\nabla^{2}_{yx}h(z,u)=h(R(x,y)z,u)+h(R(x,y)u,z).
\end{equation*}
Now symmetrize the above expression and recall the definition of $F_{h}R$ to get
\begin{eqnarray*}
r'h(x,y)&=&-\frac{1}{4}(D\tilde{D}+\tilde{D}D)h(x\wedge x_{i},y\wedge x_{i})\\
        & &-\frac{1}{4}[h(R(x,x_{i})y,x_{i})+h(R(x,x_{i})x_{i},y)+h(R(y,x_{i})x,x_{i})+h(R(y,x_{i})x_{i},x)]\\
        &=&-\frac{1}{4}(D\tilde{D}+\tilde{D}D)h(x\wedge x_{i},y\wedge x_{i})-\frac{1}{4}F_{h}R(x\wedge x_{i},y\wedge x_{i})\\
        & &+\frac{1}{2}h(R(x_{i},x)x_{i},y)+\frac{1}{2}h(R(x_{i},y)x_{i},x)
\end{eqnarray*}
This completes the proof.
\end{proof}

We also need the following 
\begin{lemma}For any $z\in \mathcal{C}^{1}$
\begin{eqnarray}
<z,cF_{h}R>&=&<g \cdot z,F_{h}R>=<F_{h}(g\cdot z),R>\nonumber\\
          &=&2<h \cdot z,R>+<g\cdot F_{h}z,R>\nonumber\\
          &=&8<\overset{\circ}{R}(z),h>+2<r\circ z,h>\\
<z,\overset{\circ}{R}(h)>&=&<\overset{\circ}{R}(z),h>
\end{eqnarray}
where $\overset{\circ}{R}(z)(x,y)=\sum_{i=1}^{4}z(R(x,x_{i})y,x_{i})$.
\end{lemma}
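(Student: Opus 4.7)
The plan is to obtain the chain of equalities by peeling off one operator at a time, using only the abstract algebraic identities for double forms that were recorded in Section 2 — namely the adjointness of $c$ with multiplication by $g$ (equation (4)), the self-adjointness and Leibniz rule for $F_h$ (stated right after equation (7)), and the pair symmetry of $R$. No heavy analysis is needed; the whole lemma is a finite-dimensional identity on curvature tensors.

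For the first equality, I would simply apply equation (4) with $\omega_1 = z \in \mathcal{C}^1$ and $\omega_2 = F_h R \in \mathcal{D}^{2,2}$, which yields $\langle z, cF_h R\rangle = \langle g\cdot z, F_h R\rangle$. The second equality $\langle g\cdot z, F_h R\rangle = \langle F_h(g\cdot z), R\rangle$ is immediate from the self-adjointness of $F_h$. For the third equality I would invoke the Leibniz property $F_h(\omega\theta) = F_h(\omega)\theta + \omega F_h(\theta)$ with $\omega = g$ and $\theta = z$, together with the small observation that $F_h(g) = 2h$: this follows by unraveling definition (7) on the symmetric $(1,1)$ form $g$, where the four terms collapse to $h(X,Y) + h(Y,X) = 2h(X,Y)$ by symmetry of $h$. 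Substituting gives $\langle F_h(g\cdot z), R\rangle = 2\langle h\cdot z, R\rangle + \langle g\cdot F_h z, R\rangle$.

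The main obstacle is the fourth equality, which is the one place that is genuinely computational. I would split it in two. For $\langle g\cdot F_h z, R\rangle$, I would again use adjointness of $g$ with $c$ to rewrite it as $\langle F_h z, cR\rangle = \langle F_h z, r\rangle$; then expanding $F_h z$ from definition (7) (extended to $\mathcal{C}^1$) in terms of the composition $r\circ z$ and using symmetry of $r$ and $z$ contributes the $2\langle r\circ z, h\rangle$ term. For $\langle h\cdot z, R\rangle$, I would expand the Kulkarni--Nomizu product in an orthonormal frame, writing
\[
(h\cdot z)_{ijkl} = h_{ik}z_{jl} + h_{jl}z_{ik} - h_{il}z_{jk} - h_{jk}z_{il},
\]
pair with $R^{ijkl}$, and collapse all four terms into a single one using the antisymmetries $R_{ijkl} = -R_{ijlk}$ and the pair symmetry $R_{ijkl} = R_{klij}$. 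After relabeling indices, the result assembles into $\overset{\circ}{R}(z)_{ij}h^{ij}$ up to an overall combinatorial factor; together with the factor $2$ in front, this accounts for $8\langle \overset{\circ}{R}(z), h\rangle$. The bookkeeping of these combinatorial factors is the delicate part, since conventions for the inner product on $\mathcal{D}^{p,q}$ differ by factorials depending on whether one sums over ordered or unordered index tuples.

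Finally, the second displayed identity $\langle z, \overset{\circ}{R}(h)\rangle = \langle \overset{\circ}{R}(z), h\rangle$ is just the self-adjointness of the operator $\overset{\circ}{R}$ on symmetric two-tensors and follows at once from the pair symmetry $R(X,Y,Z,W) = R(Z,W,X,Y)$: in components, $\langle z, \overset{\circ}{R}(h)\rangle = z^{ij}R_{ikjl}h^{kl}$, and swapping the pairs $(ik)\leftrightarrow(jl)$ via pair symmetry, together with the symmetry of $z$ and $h$, rewrites this as $h^{ij}R_{ikjl}z^{kl} = \langle h, \overset{\circ}{R}(z)\rangle$.
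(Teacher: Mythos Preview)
The paper does not actually supply a proof of this lemma; it is stated and then used immediately in the derivation of $\mathcal{Z}'h$. So there is nothing in the paper to compare your argument against, and your proposal effectively fills in what the author left implicit.

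Your outline is sound and uses exactly the tools the paper has set up: adjointness of $c$ and multiplication by $g$ (equation (4)) for the first step, self-adjointness of $F_h$ for the second, the Leibniz rule together with $F_h(g)=2h$ for the third, and a direct frame computation for the fourth. The identification $F_h(g)=2h$ is correct for the reason you give, and the reduction of $\langle g\cdot F_h z,R\rangle$ to $\langle F_h z,r\rangle=2\langle r\circ z,h\rangle$ via $\text{tr}$-manipulations with symmetric tensors is right. The pair-symmetry argument for the self-adjointness of $\overset{\circ}{R}$ is also fine.

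The one place where your write-up is still a sketch rather than a proof is the combinatorial bookkeeping in $2\langle h\cdot z,R\rangle = 8\langle \overset{\circ}{R}(z),h\rangle$, which you flag yourself. To close this you should fix once and for all the normalization of the inner product on $\mathcal{D}^{2,2}$ being used (Labbi's convention sums over ordered indices without the $1/(p!)^2$ factor), expand $(h\cdot z)_{ijkl}R^{ijkl}$ into four equal terms via the symmetries $R_{ijkl}=-R_{jikl}=-R_{ijlk}=R_{klij}$, and then match the single surviving contraction $h^{ik}z^{jl}R_{ijkl}$ against $\langle \overset{\circ}{R}(z),h\rangle = h^{ab}\sum_i z(R(e_a,e_i)e_b,e_i)$. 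With the sign convention for $R(X,Y)Z$ implicit in the paper's definition of $\overset{\circ}{R}$, the factor of $4$ from the four equal KN terms combines with the prefactor $2$ to give the stated $8$. Once you pin this down explicitly the argument is complete.
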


Next we define for $\omega \in \mathcal{C}^{2}$, the generalized Einstein tensor $T_{2}(\omega)$ by 
\begin{equation}
T_{2}(\omega)=\frac{1}{2}c^{2}\omega g-c\omega.
\end{equation}

In particular, if $\omega=R$, $T_{2}(R)$ is the usual Einstein tensor. We also remark that there is a general definition for $T_{2q}$ for $(1,q)$-curvature
tensor, see Labbi[19]. Now we prove the main theorem of this section.

\begin{thm}
The Euler-Lagrange equation for the functional $\mathcal{Z}=\sideset{^H}{}\int_{M}|z|^{2}dvol_{g}$ on the space of conformally compact AH metrics on $M$ 
is 

\begin{equation}
T_{2}((D\tilde{D}+\tilde{D}D)z)=2f
\end{equation}
where $f=\frac{1}{2}|z|^{2}g-4\overset{\circ}{R}(z)-r\circ z$.
\end{thm}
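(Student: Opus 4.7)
The approach is to compute the first variation $\delta\mathcal{Z}$ directly in the direction of a symmetric $(0,2)$-tensor $h \in \mathcal{C}^1$ compatible with the fixed-$\gamma$ hypothesis, collect the result in the form $\sideset{^H}{}\int_M \langle E(g), h\rangle \, dvol_g$, and read off the Euler--Lagrange equation $E(g) = 0$. By the product rule applied to $|z|^2\, dvol_g = \langle z, z\rangle_g\, dvol_g$, the variation splits as
\[
\delta\bigl(|z|^2\, dvol_g\bigr) \;=\; \bigl[\, 2\langle z'h, z\rangle \;-\; 2\langle z\circ z, h\rangle \;+\; \tfrac{1}{2}|z|^2 \langle g, h\rangle \,\bigr]\, dvol_g,
\]
where the middle term arises from varying the metric used to raise the two indices in $|z|^2$ and the last from the standard formula $(dvol_g)' = \tfrac{1}{2}\langle g, h\rangle\, dvol_g$.

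Next, write $z = r - \tfrac{s}{4}g$, so that $z'h = r'h - \tfrac{1}{4}(s'h) g - \tfrac{s}{4} h$; pairing with $z$, the scalar part drops because $\langle g, z\rangle = \mathrm{tr}_g z = 0$. Substitute the formula for $r'h$ from Lemma 9, then apply Lemma 10 to convert the curvature term $\langle z, cF_h R\rangle$ into $8\langle \overset{\circ}{R}(z), h\rangle + 2\langle r\circ z, h\rangle$. A short bookkeeping using the commutativity $r\circ z = z\circ z + \tfrac{s}{4} z$ shows that all the pointwise terms collapse to exactly $\langle f, h\rangle$, where $f = \tfrac{1}{2}|z|^2 g - 4\overset{\circ}{R}(z) - r\circ z$. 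The only second-order term left over is $-\tfrac{1}{2}\langle c(D\tilde{D}+\tilde{D}D) h, z\rangle$; by the adjoint relation (4) this equals $-\tfrac{1}{2}\langle (D\tilde{D}+\tilde{D}D) h, g\cdot z\rangle$, and integrating by parts via the formal adjointness of $D\tilde{D}+\tilde{D}D$ and $\tilde{\delta}\delta + \delta\tilde{\delta}$ moves the derivatives onto $g\cdot z$.

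Assembling these pieces and setting $\delta\mathcal{Z} = 0$ for all admissible $h$ yields $(\tilde{\delta}\delta + \delta\tilde{\delta})(g\cdot z) = 2f$; the form claimed in the theorem follows from the double-form identity $(\tilde{\delta}\delta + \delta\tilde{\delta})(g\cdot z) = T_2\bigl((D\tilde{D}+\tilde{D}D) z\bigr)$ available in Labbi [19]. The main obstacle lies in two places. First is that final identity: contraction $c$ does not commute with $D\tilde{D}+\tilde{D}D$, and the identity holds only because $T_2$ is engineered so that the Ricci-identity corrections produced by moving $c$ past the generalized Hessian cancel exactly. Second, the integration by parts must be executed in the Hadamard-regularized sense, and one must verify that no finite boundary part is generated at infinity under the fixed-$\gamma$ assumption; this reduces, as in the proof of Theorem 7, to the parity and trace input, notably $\mathrm{tr}_\gamma g^{(3)} = 2v^{(3)}$ and the absence of $\rho^1$-terms in all full contractions of the curvature.
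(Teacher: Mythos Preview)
Your variational computation matches the paper's: you arrive at the same integrand $\langle f,h\rangle - \tfrac{1}{2}\langle g\cdot z,(D\tilde{D}+\tilde{D}D)h\rangle$ (the paper organizes it via $N'h-\tfrac{1}{4}S'h$ rather than through $z'h$ directly, but that is cosmetic), and you correctly invoke Lemmas~9--10 and the Labbi identity relating $(\tilde{\delta}\delta+\delta\tilde{\delta})(g\cdot z)$ to $T_{2}\bigl((D\tilde{D}+\tilde{D}D)z\bigr)$.

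Where you diverge is in the boundary analysis, and this is a genuine gap. You frame the remaining difficulty as verifying that integration by parts produces no anomalous finite boundary contribution, and you claim this reduces to the parity input and the trace identity $\mathrm{tr}_{\gamma}g^{(3)}=2v^{(3)}$ used in Theorem~7. The paper does neither of these things. For the integration by parts itself, the paper simply uses the boundary conditions $h|_{\partial M}=0$ and $\partial_{\rho}h|_{\partial M}=0$ (forced by fixing $\gamma$ and the totally geodesic hypothesis on each $\overline{g}(s)$) together with a cut-off argument; no parity computation enters. The more substantial step---which you do not address---is the fundamental-lemma step: from $\sideset{^H}{}\int_{M}\langle E,h\rangle\,dvol_{g}=0$ for all admissible $h$ one must deduce $E=0$. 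Compactly supported $h$ give $E=0$ on each $M_{\epsilon}$, but for $h$ supported in the collar the paper expands $\phi(\rho)=\int_{\partial M}\langle E,h\rangle(\det g_{\rho}/\det\gamma)^{1/2}\,dvol_{\gamma}$ in powers of $\rho$, applies Paycha's explicit formula for the finite part $FP\int_{0}^{\epsilon}\rho^{-4}\phi(\rho)\,d\rho$, and then reads off the conditions $E^{(k)}=0$ order by order from $\phi^{(k)}(0)=0$ for $k\geq 3$, using the expansion $h=d\rho^{2}+h^{(2)}\rho^{2}+h^{(3)}\rho^{3}+\cdots$. The argument of Theorem~7 and the identity $\mathrm{tr}_{\gamma}g^{(3)}=2v^{(3)}$ play no role here.
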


\begin{proof}
 Let $g(s)$ be a family of conformally compact AH metrics on $M$ with fixed boundary, and $\rho$ be a special bdf for $g=g(0)$. The metric 
$g(s)^{(0)}=\rho^{2}g(s)|_{\partial M}$
then defines unique special bdfs $\rho(s)=\rho e^{w(s,\rho)}$ with respect to $g(s)$. (lemma 2.1 of [14]) We suppose that the metrics 
$\overline{g}(s)=\rho^{2}g(s)$ all have totally geodesic boundaries. Let $h=\frac{d}{ds}|_{s=0}\overline{g}(s)$, then $h|_{\partial M}=0$ and 
$\frac{\partial h}{\partial \rho}|_{\partial M}=0$. Let $Z=|z|^{2}dvol_{g}$, $S=s^{2}dvol_{g}$ and 
$N=|r|^{2}dvol_{g}=<r,r>dvol_{g}$, then $Z=N-\frac{1}{4}S$. 

We begin with the computation of $Z'h$. For $S'h$, we have 
\begin{equation}
S'h=(2ss'h+\frac{1}{2}s^{2}<g,h>)dvol_{g},
\end{equation}

where the second term on the right hand side comes from the derivative of the volume form $dvol_{g}$. For $N'h$, we have
\begin{equation}
N'h=(2<r,r'h>-2<r\circ r,h>+\frac{1}{2}|r|^{2}<g,h>)dvol_{g},
\end{equation}

where the second term on the right hand side is the derivative of the metric and the third term is the derivative of the volume form. So

\begin{eqnarray}
Z'h&=&N'h-\frac{1}{4}S'h\nonumber\\
   &=&(\frac{1}{2}(|r|^{2}-\frac{1}{4}s^{2})<g,h>-2<r\circ r,h>+2<r,r'h>-\frac{1}{2}ss'h)dvol_{g}\nonumber\\
   &=&(\frac{1}{2}|z|^{2}<g,h>-\frac{1}{2}<r,cF_{h}R>+\frac{1}{2}<sr,h>\nonumber\\
   & &-\frac{1}{2}<z\cdot g,(D\tilde{D}+\tilde{D}D)h>)dvol_{g}\nonumber\\
   &=&(\frac{1}{2}|z|^{2}<g,h>-4<\overset{\circ}{R}(z),h>-<r\circ z,h>\nonumber\\
   & &-\frac{1}{2}<z\cdot g,(D\tilde{D}+\tilde{D}D)h>)dvol_{g}
\end{eqnarray}

Let 
\begin{equation}
f=\frac{1}{2}|z|^{2}g-4\overset{\circ}{R}(z)-r\circ z,
\end{equation}

then

\begin{eqnarray}
\mathcal{Z}'h&=&\frac{d}{ds}|_{s=0}\sideset{^R}{}\int_{M}Z=\frac{d}{ds}|_{s=0}\sideset{^R}{}\int_{M}|z|^{2}dvol_{g}\nonumber\\
         &=&\sideset{^R}{}\int_{M}<f,h>dvol_{g}-\frac{1}{2}\sideset{^R}{}\int_{M}<z\cdot g,(D\tilde{D}+\tilde{D}D)h>dvol_{g}
\end{eqnarray}

Since $h$ satisfies $h|_{\partial M}=0$ and $\frac{\partial h}{\partial \rho}|_{\partial M}=0$, an easy argument involving cut-off functions shows that 
\begin{equation*}
<z\cdot g,(D\tilde{D}+\tilde{D}D)h>=<(\delta\tilde{\delta}+\tilde{\delta}\delta)(z\cdot g),h>.
\end{equation*}

Now using (7) and a general formula of Labbi about Hodge star operator, we have $(\delta\tilde{\delta}+\tilde{\delta}\delta)(z\cdot g)=-T_{2}(\omega)$, 
where $\omega=(D\tilde{D}+\tilde{D}D)z$. To find the Euler-Lagrange equation of the functional $\mathcal{Z}$, we have to solve 
\begin{equation}
\sideset{^H}{}\int_{M}<f-\frac{1}{2}T_{2}(\omega),h>dvol_{g}=0
\end{equation}

Since $h$ is a variational field, we may divide the argument into two parts. If $h$ is supported inside $M_{\epsilon}$ for some $\epsilon >0$, the above 
integral has no sigularity and the Euler-Lagrange equation for the above functional is $T_{2}(\omega)=2f$. 
Now let $h$ be supported in $M\setminus M_{\epsilon}$. We may suppose that $\epsilon$ is small enough so that $M\setminus M_{\epsilon}$ has the product 
structure $[0,\epsilon)\times \partial M$. In $[0,\epsilon)\times \partial M$, $\overline{g}(s)$ has the expansion

\begin{equation*}
\overline{g}(s)=d\rho^{2}(s)+g^{(0)}(s)+g^{(2)}(s)\rho^{2}(s)+g^{(3)}(s)\rho^{3}(s)+o(\rho^{3}(s))
\end{equation*}
and $h$ has the expansion

\begin{equation*}
h=d\rho^{2}+h^{(2)}\rho^{2}+h^{(3)}\rho^{3}+o(\rho^{3})
\end{equation*}
Note that $h$ has no constant term because the family $g(s)$ has fixed boundary and $h$ has no first order term because of the form of 
$\overline{g}(s)$. We will solve, in terms of the $\rho$-independent frame $X_{u}=\rho \overline{X}_{u}$, the equation (47). First we write it as

\begin{eqnarray*}
0&=&FP\int_{0}^{\epsilon}\int_{\partial M}<f-\frac{1}{2}T_{2}(\omega),h>dvol_{g}\\
 &=&FP\int_{0}^{\epsilon}\rho^{-4}\int_{\partial M}<f-\frac{1}{2}T_{2}(\omega),h>(\frac{\det g_{\rho}}{\det \gamma})^{1/2}dvol_{\gamma}d\rho
\end{eqnarray*}

Let $\phi(\rho)=\int_{\partial M}<f-\frac{1}{2}T_{2}(\omega),h>(\frac{\det g_{\rho}}{\det \gamma})^{1/2}dvol_{\gamma}$, then $\phi(\rho)$ has the expansion
\begin{equation*}
\phi(\rho)=\sum_{k=0}^{\infty}\frac{\phi^{(k)}(0)}{k!}\rho^{k}.
\end{equation*}
In this way, the equation is reduced to 

\begin{equation*}
0=FP\int_{0}^{\epsilon}\rho^{-4}\phi(\rho)d\rho.
\end{equation*}

Now a computation of Paycha [23] then shows that the finite part on the right hand side is 

\begin{eqnarray*}
& &FP\int_{0}^{\epsilon}\rho^{-4}\phi(\rho)d\rho\\
&=&\frac{\phi^{(3)}(0)}{3!}\sum^{3}_{j=1}\frac{1}{j}-\frac{1}{3!}\int^{\epsilon}_{0}\log \rho\phi^{(4)}(\rho)d\rho\\
&=&\frac{\phi^{(3)}(0)}{3!}\sum^{3}_{j=1}\frac{1}{j}-\sum_{r=0}^{\infty}\phi^{(4+r)}(0)f_{r}(\epsilon),
\end{eqnarray*}
where $f_{r}(\epsilon)$ is a function in $\epsilon$ with leading term $\epsilon^{r+1}\log \rho$. Since $\epsilon$ is an arbitrary parameter, we must have
\begin{equation*}
\phi^{(k)}(0)=0
\end{equation*}
for $k\geq 3$.\\

To procede further, set $E=f-\frac{1}{2}T_{2}(\omega)$, the integrand of $\phi(\rho)$
has the following expression

\begin{equation*}
\phi^{(k)}(0)=\sum^{k}_{i=0}<E,h>^{(i)}v^{(k-i)}
\end{equation*}

where $(\frac{\det g_{\rho}}{\det \gamma})^{1/2}$ is expanded as 
\begin{equation*}
(\frac{\det g_{\rho}}{\det \gamma})^{1/2}=1+\rho^{2}v^{(2)}+\rho^{3}v^{(3)}+o(\rho^{3}),
\end{equation*}

and $<E,h>$ is expanded as

\begin{equation*}
<E,h>=\sum^{\infty}_{i=0}<E,h>^{(i)}\rho^{i}.
\end{equation*}

Now the integrand of $\phi^{(0)}(0)$ is

\begin{equation*}
<E,h>^{(0)}=<E^{(0)},h^{(0)}>_{\gamma}=0
\end{equation*}

by the assumption about $h$. Hence $\phi^{(0)}(0)=0$. The integrand of $\phi^{(1)}(0)$ is 
\begin{equation*}
<E,h>^{(1)}=<E^{(0)},h^{(1)}>_{\gamma}+<E^{(1)},h^{(0)}>_{\gamma}=0
\end{equation*}

again by the assumption about $h$. Hence $\phi^{(1)}(0)=0$. The integrand of $\phi^{(2)}(0)$ is 
\begin{equation*}
<E,h>^{(2)}=<E^{(0)},h^{(2)}>_{\gamma},
\end{equation*}

and the integrand of $\phi^{(3)}(0)$ is 
\begin{equation*}
<E,h>^{(3)}=<E^{(0)},h^{(3)}>_{\gamma}+<E^{(1)},h^{(2)}>_{\gamma}.
\end{equation*}
So $\phi^{(3)}(0)=0$ implies that $E^{(0)}=0$ and $E^{(1)}=0$. And hence $\phi^{(2)}(0)=0$.

The integrand of $\phi^{(4)}(0)$ is 

\begin{equation*}
<E^{(2)},h^{(2)}>_{\gamma}.
\end{equation*}

So $\phi^{(4)}(0)=0$ implies that $E^{(2)}=0$. Solving the equation successively, we have $E^{(k)}=0$ for $k\geq 0$. Namely, we have 
\begin{equation*}
T_{2}(\omega)=2f.
\end{equation*}

This completes the proof.

\end{proof}

\bibliographystyle{amsplain}

\end{document}